\documentclass[12pt,a4paper]{article}

\usepackage{amsmath,amssymb,amsfonts,amsthm}
\usepackage{mathrsfs} 
\usepackage{bm} 
\usepackage{graphicx} 
\usepackage{xcolor}
\usepackage{booktabs}
\usepackage{caption}
\usepackage{hyperref}
\usepackage{geometry} 
\geometry{left=25mm,right=25mm,top=25mm,bottom=30mm}

\usepackage{algorithm}
\usepackage{algorithmicx}
\usepackage{algpseudocode}

\usepackage{listings}

\theoremstyle{plain}
\newtheorem{theorem}{Theorem}[section] 

\newtheorem{lemma}[theorem]{Lemma}
\newtheorem{corollary}[theorem]{Corollary}

\theoremstyle{definition}
\newtheorem{definition}[theorem]{Definition}
\newtheorem{example}[theorem]{Example}

\theoremstyle{remark}
\newtheorem{remark}[theorem]{Remark}

\title{Necessary and sufficient conditions for relative controllability of discrete linear delay systems}
\author{Javad A. Asadzade\thanks{Department of Mathematics, Eastern Mediterranean University, North Cyprus, Turkey. Email: javad.asadzade@emu.edu.tr} 
	\, and \, Nazim I. Mahmudov\thanks{Department of Mathematics, Eastern Mediterranean University, North Cyprus, Turkey; Research Center of Econophysics, Azerbaijan State University of Economics (UNEC), Baku, Azerbaijan. Email: nazim.mahmudov@emu.edu.tr}}
\date{} 

\begin{document}
	
	\maketitle
	
	\begin{abstract}
		In this paper, we investigate delayed linear difference systems and establish several fundamental results. We first provide a Kalman-type rank condition tailored for delayed linear difference systems. Furthermore, we construct the discrete Gramian matrix and prove its non-singularity, which is essential for analyzing system properties. Additionally, we obtain necessary and sufficient conditions ensuring the relative controllability of the system. Finally, we formulate the control function for effective system control and validate our theoretical findings with numerical examples. These examples illustrate the practical behavior of discrete linear delay systems.
	\end{abstract}
	
	\noindent\textbf{Keywords:} Relative controllability, Discrete delay systems, Kalman-type rank condition.
	
	\section{Introduction}\label{sec1}

Throughout this paper, we adopt the following notation:  
\begin{itemize}
\item $\Theta$ and $I$ denote the $d \times d$ zero and identity matrices, respectively.
\item For integers $a, b \in \mathbb{Z} \cup \{\pm\infty\}$ with $a \le b$, we write $\mathbb{Z}_a^b := \{a, a+1, \dots, b\}$.
\item The forward difference operator is defined by $\Delta y(r) = y(r+1) - y(r)$.
\end{itemize}

In this article, we study the discrete linear delay system
\begin{equation}\label{eq11}
\begin{cases}
 y(r+1) = Ay(r) + By(r-p) + Cu(r), & r \in \mathbb{Z}_{0}^{\,r_{1}-1}, \\[2mm]
 y(r) = \psi(r), & r \in \mathbb{Z}_{-p}^{\,0}, \\[2mm]
 y(r_{1}) = y^*, & r_{1} \ge r^* \in \mathbb{Z}_{1}^{\,\infty},
\end{cases}
\end{equation}
where $p \ge 1$ is a fixed integer, $A$ and $B$ are constant matrices in $\mathbb{R}^{d \times d}$, and $C \in \mathbb{R}^{d \times k}$.  
The state $y : \mathbb{Z}_{-p}^{\,\infty} \to \mathbb{R}^d$ evolves from a prescribed initial history $\psi : \mathbb{Z}_{-p}^{\,0} \to \mathbb{R}^d$, with $y(r_1) = y^*$ specifying a terminal condition for some finite $r_1 > 0$.  
The control function is $u : \mathbb{Z}_{0}^{\,\infty} \to \mathbb{R}^k$.

The principal aim of this work is to establish the \emph{relative controllability} of \eqref{eq11}.  
A key step in our analysis is to employ an explicit solution representation.  
For systems in which $A$ and $B$ commute, Diblík and Khusainov \cite{Diblik2, Diblik3} developed such a representation for difference equations with constant coefficients and constant delay, using the concept of a \emph{discrete matrix delayed exponential} to extend classical methods from ordinary differential equations to the discrete-delay setting.

This framework was generalized by Mahmudov \cite{Mahmudov2}, who introduced a discrete delayed exponential depending on a sequence of matrices, thereby removing the commutativity requirement.  
Later, in \cite{Mahmudov1}, Mahmudov considered the noncommutative case as follows,
\begin{align}\label{eq2}
\begin{cases}
y(r+1) = Ay(r) + By(r-p) + f(r), & r \geq 1, \\
y(r) = \psi(r), & r \in \mathbb{Z}_{-p}^{0},
\end{cases}
\end{align}
and derived the representation
\begin{align}
y(r) &= Y_{p}^{A,B}(r-p)\psi(0) 
+ \sum_{j=-p}^{-1} Y_{p}^{A,B}(r-1-2p-j)B\psi(j) \nonumber \\
&\quad + \sum_{j=1}^{r} Y_{p}^{A,B}(r-p-j)f(j-1), 
\quad r \in \mathbb{Z}_{-p}^{\infty}.
\end{align}
In the presence of control, the solution to \eqref{eq11} takes the form
\begin{align}\label{16}
y(r) &= Y_{p}^{A,B}(r-p)\psi(0) 
+ \sum_{j=-p}^{-1} Y_{p}^{A,B}(r-1-2p-j)B\psi(j) \nonumber \\
&\quad + \sum_{j=1}^{r} Y_{p}^{A,B}(r-p-j)Cu(j-1).
\end{align}
This representation serves as the analytical foundation for our controllability study.  
In discrete-time systems with delays or fractional dynamics, controllability analysis is often considerably more intricate, typically requiring advanced techniques such as matrix function theory and spectral analysis.

The controllability of delayed discrete systems has been the subject of extensive research.  
Diblík, Khusainov, and Růžičková \cite{Diblik5} established controllability criteria for systems with constant coefficients and pure delay, expressed via the discrete matrix delayed exponential, and explicitly constructed controls steering the state to a prescribed target.  
Diblík, Fečkan, and Pospíšil \cite{Diblik6} devised new control functions for delayed difference equations in boundary value problems, including cases involving invariant subspaces.  
In \cite{Diblik1}, Diblík investigated \emph{trajectory controllability}—the ability to transfer arbitrary solutions to desired trajectories—deriving a Kalman-type criterion and providing illustrative examples.  
Diblík and Mencáková \cite{Diblik4} addressed higher-order delayed systems with a single delay, employing discrete delayed matrix sine and cosine functions to formulate controllability criteria and construct explicit controls, while also showing that certain delay-elimination transformations may yield non-equivalent controllability problems.

Further developments in the theory appear in \cite{Babiarz, Chitour1, Chitour2, Diblik7, Diblik8, Jin, Klamka1, Klamka2, Mahmudov4, Mazanti, Pospisil, Shi, Yang}, reflecting the breadth of current research in the controllability of discrete dynamical systems.

The present work extends and refines the results of \cite{Diblik5} in the setting of \eqref{eq11}.  
Section~\ref{sec2} introduces the necessary preliminary concepts and lemmas.  
In Section~\ref{sec3}, we derive Kalman-type rank conditions for assessing the relative controllability of \eqref{eq11}.  
Section~\ref{sec4} constructs the discrete Gramian matrix, establishes its nonsingularity, and provides necessary and sufficient conditions for controllability, along with an explicit control design.  
Numerical examples illustrating the theoretical results are presented in Section~\ref{sec5}.

\section{Preliminaries}\label{sec2}
In this section, we present key elements, definitions, and lemmas that are essential to establish the relative controllability of \eqref{eq11}. These foundational concepts provide the necessary mathematical framework for the subsequent analysis and proofs.

Now, we define the kernel function \( Y_{p}^{A,B}(r) \), which characterizes the solution of \eqref{eq11}, as follows:
\begin{definition}\cite{Mahmudov1}\label{def1}
Let \(p \geq 1\) and \(A, B\) be constant matrices of dimension \(d \times d\). The delayed perturbation of the discrete matrix exponential is defined as:
\begin{align}
Y_{p}^{A,B}(r) = \sum_{i=0}^{\lfloor \frac{r + p}{p+1} \rfloor} \mathcal{Q}(r + p - pi; i), \, r \in \mathbb{Z}_{0}^{\infty},
\end{align}
where \(\mathcal{Q}(r; i)\) is given by:
\begin{align}
\mathcal{Q}(r; i) =
\begin{cases}
0,& i \in \mathbb{Z}_{-\infty}^{-1}, \\
A^r \delta(r),& i = 0, \\
\sum_{m=i}^{r} A^{r-m}B\mathcal{Q}(m-1; i-1) \delta(r-i),& i \in \mathbb{Z}_{1}^{\infty}.
\end{cases}
\end{align}
Here, \(\delta(r)\) denotes the Heaviside step function defined as:
\[
\delta(r) =
\begin{cases}
0, & r < 0, \\
1, & r \geq 0.
\end{cases}
\]
\end{definition}
This formulation provides a structured representation of solutions to discrete delay systems, emphasizing the role of delayed matrix exponential perturbations.
\begin{remark}
 It should be emphasized that \( \mathcal{Q}(r; i) \) was used in \cite{Mahmudov1} to define delayed perturbation of the discrete matrix exponential  functions. Using the definition \ref{def1} of \( \mathcal{Q}(r; i) \), one may observe the following:
\begin{table}[h!]
\centering
\renewcommand{\arraystretch}{2} 
\setlength{\tabcolsep}{12pt} 
\begin{tabular}{|c|c|c|c|c|c|}
\hline
$\mathcal{Q}(r,i)$ & $i=0$ & $i=1$ & $i=2$ & $\cdots$ & $i=q$ \\ \hline
$\mathcal{Q}(0, i)$ & $I$ & $\Theta$ & $\Theta$ & $\cdots$ & $\Theta$ \\ \hline
$\mathcal{Q}(1, i)$ & $A$ & $B$ & $\Theta$ & $\cdots$ & $\Theta$ \\ \hline
$\mathcal{Q}(2, i)$ & $A^2$ & $AB + BA$ & $B^2$ & $\cdots$ & $\Theta$ \\ \hline
$\vdots$ & $\vdots$ & $\vdots$ & $\vdots$ & $\ddots$ & $\Theta$ \\ \hline
$\mathcal{Q}(q, i)$ & $A^q$ & $\cdots$ & $\cdots$ & $\cdots$ & $B^q$ \\ \hline
\end{tabular}
\caption{Table of $\mathcal{Q}(r,i)$ for different $i$ and $r$}
\label{tab:Q_table}
\end{table}
From the above table, it is easy to see that in the case of commutativity \( AB = BA \), we have:

\[
\mathcal{Q}(r; i) := \binom{r}{i} A^{r-i} B^i \delta(r - i), \, r, i \in \mathbb{Z}_{0}^{\infty}.
\]
\end{remark}
To study relative controllability, we employ an auxiliary result, which is analogous to the fundamental lemma of variational calculus.
\begin{lemma}\label{lem1}\cite{Diblik5}
Let a function \( \Psi : \mathbb{Z}_s^q \to \mathbb{R} \) be given. The equality
\[
\sum_{r=l}^q \Psi(r)\xi(r) = 0,
\]
where \( \xi : \mathbb{Z}_l^q \to R \) is an arbitrary function holds if and only if \( \Psi(r) = 0 \) for all \( r \in \mathbb{Z}_l^q \).
\end{lemma}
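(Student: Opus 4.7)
The statement is a discrete analogue of the fundamental lemma of the calculus of variations, so the natural plan is to prove the two implications separately and to exploit the freedom in choosing $\xi$ by plugging in suitable \emph{test sequences}. The only nontrivial ingredient is recognizing that ``arbitrary $\xi$'' lets us isolate a single term of the sum via indicator sequences; there is no genuine obstacle here.

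First I would dispose of the easy direction. If $\Psi(r)=0$ for every $r\in\mathbb{Z}_l^q$, then for any function $\xi:\mathbb{Z}_l^q\to\mathbb{R}$ every summand $\Psi(r)\xi(r)$ vanishes, so trivially $\sum_{r=l}^q \Psi(r)\xi(r)=0$.

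For the converse, assume the identity $\sum_{r=l}^q \Psi(r)\xi(r)=0$ holds for every function $\xi:\mathbb{Z}_l^q\to\mathbb{R}$. Fix an arbitrary index $r_0\in\mathbb{Z}_l^q$ and define the test sequence
\[
\xi_{r_0}(r) := \begin{cases} 1, & r = r_0, \\ 0, & r \in \mathbb{Z}_l^q \setminus \{r_0\}. \end{cases}
\]
Substituting $\xi=\xi_{r_0}$ into the assumed identity collapses the sum to a single term, giving $\Psi(r_0)=0$. Since $r_0$ was arbitrary in $\mathbb{Z}_l^q$, we conclude $\Psi(r)=0$ for all $r\in\mathbb{Z}_l^q$, completing the proof.

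The whole argument is elementary; the only subtle point worth flagging is that the class of admissible $\xi$ must be rich enough to contain the Kronecker-delta sequences (which it is, since $\xi$ is allowed to be \emph{any} real-valued function on $\mathbb{Z}_l^q$). No further machinery, continuity, or structural hypotheses on $\Psi$ are required.
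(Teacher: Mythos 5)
Your proof is correct and is the standard argument (Kronecker-delta test sequences for the nontrivial direction); the paper itself states this lemma with a citation to Diblík, Khusainov, and Růžičková and gives no proof, so there is nothing to diverge from. The only cosmetic point is the mismatch in the lemma's stated domain ($\mathbb{Z}_s^q$ versus $\mathbb{Z}_l^q$), a typo in the paper that your argument correctly resolves by working on $\mathbb{Z}_l^q$ throughout.
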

And we defined the following lemma, which is important in the proof of main result.
\begin{lemma}\cite{Mahmudov1}\label{lem2}
    $Y_{p}^{A,B}(r)$ is a solution of
    \begin{align*}
       & Y_{p}^{A,B}(r+1)=AY_{p}^{A,B}(r)+BY_{p}^{A,B}(r-p),\\
        &Y_{p}^{A,B}(r)=A^{p+r},\, r\in \mathbb{Z}_{-p}^{0},\, Y_{p}^{A,B}(r)=\Theta,\, r\in \mathbb{Z}_{-\infty}^{-p-1}.
    \end{align*}
\end{lemma}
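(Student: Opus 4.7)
The plan is to verify the two initial-data conditions by direct inspection of the series, and then prove the recurrence by exploiting a one-step identity for the kernel $\mathcal{Q}(\,\cdot\,;\,\cdot\,)$ that underlies the definition of $Y_{p}^{A,B}$.

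First, I address the initial data. For $r\in\mathbb{Z}_{-p}^{0}$ one has $0\le r+p\le p$, so $\lfloor(r+p)/(p+1)\rfloor=0$ and only the $i=0$ summand contributes, giving $Y_{p}^{A,B}(r)=\mathcal{Q}(r+p;0)=A^{r+p}\delta(r+p)=A^{r+p}$. For $r\le -p-1$, the index $r+p$ is negative, the upper limit of the sum is strictly negative, so the sum is empty and $Y_{p}^{A,B}(r)=\Theta$.

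Second, the main lemma for the recurrence is the step-identity
$$\mathcal{Q}(s+1;i)=A\,\mathcal{Q}(s;i)+B\,\mathcal{Q}(s;i-1),$$
which I would establish for every $i\ge 1$ and every integer $s$, and for $i=0$ when $s\ge 0$. The proof splits the defining sum $\sum_{m=i}^{s+1}A^{s+1-m}B\,\mathcal{Q}(m-1;i-1)$ into its $m=s+1$ term and the remainder, recognising the remainder as $A$ times the expression defining $\mathcal{Q}(s;i)$ and the leftover as $B\,\mathcal{Q}(s;i-1)$; the degenerate cases $s+1<i$ and $s+1=i$ are dealt with separately by exploiting the support of the Heaviside factors $\delta(\cdot)$. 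Once this identity is in hand, applying it termwise to
$$Y_{p}^{A,B}(r+1)=\sum_{i=0}^{N_{1}}\mathcal{Q}\!\bigl(r+1+p-pi;\,i\bigr),\qquad N_{1}:=\left\lfloor\frac{r+1+p}{p+1}\right\rfloor,$$
with $s=r+p-pi$, and re-indexing the $B$-part by $j=i-1$, produces
$$Y_{p}^{A,B}(r+1)=A\sum_{i=0}^{N_{1}}\mathcal{Q}(r+p-pi;i)+B\sum_{j=0}^{N_{1}-1}\mathcal{Q}\!\bigl(r-pj;\,j\bigr).$$

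Third, a short arithmetic check on floor functions gives $N_{1}-1=\lfloor r/(p+1)\rfloor$, so the second sum is exactly $Y_{p}^{A,B}(r-p)$. For the first, either $N_{1}=\lfloor(r+p)/(p+1)\rfloor$ and we are done, or $N_{1}$ exceeds that value by $1$ (this happens precisely when $r+1+p$ is divisible by $p+1$); in the latter case the extra summand is $A\,\mathcal{Q}(0;N_{1})=0$, since $\mathcal{Q}(0;i)=0$ for all $i\ge 1$. Combining the two sums yields $A\,Y_{p}^{A,B}(r)+B\,Y_{p}^{A,B}(r-p)$, as required. The main obstacle is exactly this bookkeeping of the upper summation limits after the index shift: verifying that the spurious boundary terms produced by the off-by-one in the floor evaluate to zero through the support properties of $\mathcal{Q}(0;\cdot)$, and separately handling the $i=0$ stratum where the step-identity requires $s\ge 0$. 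The remainder is routine algebraic manipulation.
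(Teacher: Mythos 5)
The paper does not prove this lemma at all: it is imported verbatim from \cite{Mahmudov1}, so there is no in-paper argument to compare against. Judged on its own, your proof is essentially correct and is the natural direct verification: the initial-data checks are right, and the step identity $\mathcal{Q}(s+1;i)=A\,\mathcal{Q}(s;i)+B\,\mathcal{Q}(s;i-1)$ (valid for all $s$ when $i\ge 1$, and for $s\ge 0$ when $i=0$, which suffices since there $s=r+p\ge p$) does carry the termwise computation through, with the floor arithmetic $N_1-1=\lfloor r/(p+1)\rfloor$ handling the $B$-sum exactly as you say.

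One bookkeeping slip worth fixing: when $N_1=\lfloor(r+p)/(p+1)\rfloor+1$, i.e.\ when $(p+1)\mid r$, the spurious summand in the $A$-part is not $A\,\mathcal{Q}(0;N_1)$ but $A\,\mathcal{Q}(r+p-pN_1;N_1)=A\,\mathcal{Q}(c;c+1)$ with $c=r/(p+1)$ (these coincide only at $r=0$). The term still vanishes, but for the slightly different reason that $\mathcal{Q}(s;i)=\Theta$ whenever $s<i$ because of the factor $\delta(s-i)$ — here $\delta(c-(c+1))=\delta(-1)=0$ — rather than because $\mathcal{Q}(0;i)=\Theta$ for $i\ge 1$. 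With that correction the argument is complete.
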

\begin{lemma}\cite{Mahmudov1}\label{lem3}
    Let $m\in \mathbb{Z}_{0}^{\infty}$. $r\in \mathbb{Z}_{(m-1)(p+1)+1}^{m(p+1)}$ if and only if
    \begin{align*}
        m=\Big\lfloor \frac{r-1}{p+1}\Big\rfloor+1=\Big\lfloor \frac{r+p}{p+1}\Big\rfloor.
    \end{align*}
\end{lemma}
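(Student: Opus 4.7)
The plan is to first observe that the two floor expressions in the conclusion are algebraically identical, so it suffices to prove equivalence between the range condition and one of them, say $m = \lfloor (r-1)/(p+1) \rfloor + 1$. Indeed, writing $r+p = (r-1) + (p+1)$ and pulling the integer $1$ out of the floor gives
\begin{equation*}
\Big\lfloor \frac{r+p}{p+1}\Big\rfloor = \Big\lfloor \frac{r-1}{p+1} + 1\Big\rfloor = \Big\lfloor \frac{r-1}{p+1}\Big\rfloor + 1,
\end{equation*}
so the equality of the two floor expressions is automatic and carries no content.

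For the forward implication, I would start from $r \in \mathbb{Z}_{(m-1)(p+1)+1}^{m(p+1)}$, i.e. $(m-1)(p+1)+1 \le r \le m(p+1)$, and subtract one to obtain $(m-1)(p+1) \le r-1 \le m(p+1)-1$. Dividing by the positive integer $p+1$ then yields $m-1 \le (r-1)/(p+1) \le m - 1/(p+1) < m$, which places $(r-1)/(p+1)$ in the half-open interval $[m-1, m)$. Applying the floor function gives $\lfloor (r-1)/(p+1) \rfloor = m-1$, and hence $m = \lfloor (r-1)/(p+1) \rfloor + 1$, as claimed.

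For the converse, I would simply reverse the chain. Assume $m = \lfloor (r-1)/(p+1) \rfloor + 1$; by the definition of the floor, $m - 1 \le (r-1)/(p+1) < m$, so multiplying by $p+1$ yields $(m-1)(p+1) \le r-1 < m(p+1)$. Since $r$ is an integer, this is equivalent to $(m-1)(p+1)+1 \le r \le m(p+1)$, i.e. $r \in \mathbb{Z}_{(m-1)(p+1)+1}^{m(p+1)}$.

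There is no substantive obstacle here; the entire content is bookkeeping with floor functions over an interval of length exactly $p+1$. The only mild subtlety is keeping track of the strict versus non-strict inequalities so that the endpoint $r = m(p+1)$ maps to $(r-1)/(p+1) = m - 1/(p+1) < m$ rather than equalling $m$, which is what makes the floor drop to $m-1$ at that endpoint.
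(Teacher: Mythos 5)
Your proof is correct. Note that the paper itself gives no proof of this lemma --- it is quoted from \cite{Mahmudov1} --- so there is nothing to compare against; your argument is the standard one: the identity $\lfloor (r+p)/(p+1)\rfloor = \lfloor (r-1)/(p+1)\rfloor + 1$ follows from pulling the integer $1$ out of the floor, and the equivalence with $(m-1)(p+1)+1 \le r \le m(p+1)$ is exactly the statement $\lfloor (r-1)/(p+1)\rfloor = m-1$, which you verify cleanly in both directions, handling the endpoint $r = m(p+1)$ and the integrality of $r$ correctly.
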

According to \cite{Mahmudov1}, using the Lemma \ref{lem3}, one can easily show that
\begin{align}\label{v67}
Y_{p}^{A,B}(r)=
    \begin{cases}
        \Theta,& r\in \mathbb{Z}_{-\infty}^{-p-1},\\
        \sum_{i=0}^{m}\mathcal{Q}(r+p-pi, i),& r\in \mathbb{Z}^{m(p+1)}_{(m-1)(p+1)+1},\,
        m\in \mathbb{Z}_{0}^{\infty}.
    \end{cases}
\end{align}
The Cayley-Hamilton theorem simplifies the analysis of controllability by expressing matrix powers in terms of lower-order terms. This helps in computing the controllability matrix efficiently and verifying rank conditions. It also aids in deriving the state transition matrix, which is essential for studying state reachability in control systems. The proof of the following theorem is analogous to \cite{Mahmudov4}, and therefore, we omit it.
\begin{theorem}\label{t1}\cite{Mahmudov4}
For any \( n\in \mathbb{Z}_{d}^{\infty} \), the following equality holds:

\begin{align*}
   & \text{rank}\{\mathcal{Q}(r, m)C: \, r\in \mathbb{Z}_{0}^{d-1}\}
    =\text{rank}\{\mathcal{Q}(r, i)C: \, r\in \mathbb{Z}_{0}^{n},\, i\in \mathbb{Z}_{0}^{m}\}.
\end{align*}
\end{theorem}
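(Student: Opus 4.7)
The plan is a two-stage Cayley-Hamilton-type reduction calibrated to the recursive definition of $\mathcal{Q}(r,i)$. One direction of the rank equality is immediate: the left-hand family is the $i=m$ slice of the right-hand one (using $r\le d-1\le n$), hence $\mathrm{rank}(\mathrm{LHS})\le\mathrm{rank}(\mathrm{RHS})$. The substantive content of the theorem is therefore the reverse inequality, namely that the additional generators present on the right can be absorbed without enlarging the column span.

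For stage one, I would reduce the $r$-index. The Cayley-Hamilton theorem gives $A^k\in\mathrm{span}\{I,A,\dots,A^{d-1}\}$ for every $k\ge d$. Substituting this into the recursion
\begin{equation*}
\mathcal{Q}(r,i)\;=\;\sum_{\ell=i}^{r} A^{r-\ell}\,B\,\mathcal{Q}(\ell-1,i-1)\,\delta(r-i),\qquad i\ge 1,
\end{equation*}
and inducting on $i$ starting from the base case $\mathcal{Q}(r,0)=A^r\delta(r)$, I would prove that every $\mathcal{Q}(r,i)C$ with $r\ge d$ belongs to $\mathrm{span}\{\mathcal{Q}(r',i')C:r'\le d-1,\ i'\le i\}$. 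This eliminates the dependence of the right-hand rank on $n$, so that the right-hand family may be replaced by its restriction to $r\in\mathbb{Z}_0^{d-1}$, $i\in\mathbb{Z}_0^m$.

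Stage two is the collapse of the level index down to its top value $m$, and it is the principal obstacle. The recursion naturally expresses higher-level kernels in terms of lower-level ones, so the needed reverse expression—recovering each lower-level generator $\mathcal{Q}(r,i-1)C$ with $r\le d-1$ inside the span of the top-level family $\{\mathcal{Q}(r',m)C:r'\le d-1\}$—requires careful combinatorial bookkeeping. I would argue by downward induction on $i$ from $m$ to $0$, repeatedly reading the recursion backwards and using the Cayley-Hamilton relations for $A$ to truncate auxiliary powers introduced at each step. This mirrors the blueprint of the analogous proof in \cite{Mahmudov4} cited by the authors; following that template faithfully, with the combinatorial coefficients tracked level by level, is where essentially all of the work sits.
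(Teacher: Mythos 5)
The paper itself offers no proof of this theorem: it is quoted from \cite{Mahmudov4} and the proof is explicitly omitted, so your proposal can only be judged against the intended argument. Judged that way, the decisive problem is your stage two. Reading the left-hand side literally as the single slice $\{\mathcal{Q}(r,m)C : r\in\mathbb{Z}_0^{d-1}\}$, you set out to show that every lower-level generator $\mathcal{Q}(r,i)C$ with $i<m$ lies in the span of the top level. That is provably impossible: take $B=\Theta$, so that $\mathcal{Q}(r,i)=\Theta$ for all $i\ge 1$ while $\mathcal{Q}(r,0)=A^{r}$; for any $m\ge 1$ the left-hand rank is then $0$, whereas the right-hand family contains $C, AC,\dots$ and has positive rank whenever $C\neq\Theta$. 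The statement as printed is therefore a misprint --- the way Theorem~\ref{t1} is actually invoked in the proof of Theorem~\ref{uuu}, and the definition of $S$ there, show that the reduced family is meant to keep \emph{all} levels $i\in\mathbb{Z}_0^{d-1}$, so no collapse of the level index is required or possible. The correct response was to flag and repair the statement; as written, stage two cannot be carried out by any amount of combinatorial bookkeeping.

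Stage one also has a real gap. Substituting the Cayley--Hamilton relation $A^{k}=\sum_{l=0}^{d-1}c_{l}A^{l}$ into the recursion $\mathcal{Q}(r,i)=\sum_{\ell=i}^{r}A^{r-\ell}B\,\mathcal{Q}(\ell-1,i-1)\delta(r-i)$ destroys the structure you need to preserve: $\mathcal{Q}(r,i)$ is the sum of \emph{all} words in $A,B$ of length $r$ containing $i$ letters $B$, and after the substitution you are left with individual words $A^{a}BA^{b}\cdots C$ with reduced exponents, not with the symmetric sums $\mathcal{Q}(r',i')C$. Whether such an individual word lies in $\operatorname{span}\{\mathcal{Q}(r',i')C\}$ is exactly what is in question, so your induction on $i$ does not close. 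A workable route is a stabilization argument for the nested spans $U_{j}=\operatorname{span}\{\mathcal{Q}(r,i)C: r\le j,\ i\le r\}$, or the bivariate Cayley--Hamilton machinery of \cite{Mahmudov4}; either way, the step you describe as routine substitution is precisely the nontrivial content of the theorem and is not supplied by your sketch.
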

\begin{remark}
    If \( i=0 \), \( d=k \), and \( C=I \), then we obtain \( \mathcal{Q}(r,0)=A^r \), reducing Theorem \ref{t1} to the Cayley-Hamilton theorem:
    \begin{align*}
        \text{rank}\{A^r\, :\, r\in \mathbb{Z}_{0}^{d-1}\}=  \text{rank}\{A^r\, :\, r\in \mathbb{Z}_{0}^{n}\},\, for\, n\in \mathbb{Z}_{d}^{\infty}.
    \end{align*}
    \end{remark}
\section{Kalman type rank condition}\label{sec3}
In this section of the article, our objective is to derive the \textbf{Kalman type rank condition} for the relative controllability of \eqref{eq11}. To achieve this, we first define the concept of relative controllability as follows:

\begin{definition}\label{def2}\cite{Diblik5}
    System \eqref{eq11} is said to be relatively controllable if, for any initial function \(\psi: \mathbb{Z}^0_{-p} \to \mathbb{R}^d\), any specified terminal state \(y^* \in \mathbb{R}^d\), and any terminal point \(r_1 \geq r^* \in \mathbb{Z}_1^\infty\), there exists a discrete control function \(u^*: \mathbb{Z}^{r_1-1}_0 \to \mathbb{R}^k\) such that when the system is driven by this input \(u = u^*\), i.e.,
\[
y(r+1) = Ay(r)+By(r-p) + C u^*(r),
\]
the system will have a solution \(y^*: \mathbb{Z}^{r_1}_{-p} \to \mathbb{R}^d\), with \(y^*(r_1) = y^*\) and \(y^*(r) = \psi(r)\) for all \(r \in \mathbb{Z}^0_{-p}\). This ensures that the system can be controlled to reach any desired final state starting from any initial condition within a specified time frame.
\end{definition}

Our goal is to apply the \textbf{Kalman type rank condition} to systematically determine whether this relative controllability can be achieved based on the system's dynamics.

Before presenting the criterion for relative controllability, we introduce an auxiliary vector and definitions as follows.
\begin{align}\label{s1}
\eta &=y^* -Y_{p}^{A,B}(r_{1}-p)\psi(0)- \sum_{j=-p}^{-1} Y_{p}^{A,B}(r_{1}-1-2p-j)B\psi(j).
\end{align}
\begin{definition}\cite{Diblik5}
For a given positive number $\delta$, we define the class of bounded discrete functions $\Omega_{\delta}(s, q)$ as:

$$
\Omega_{\delta}(s, q) := \{ \omega: \mathbb{Z}_s^q \to \mathbb{R}^n \mid \|\omega\|_{sq} \leq \delta \}.
$$

Here, for a function $\omega: \mathbb{Z}_s^q \to \mathbb{R}^n$, the norm $\|\omega\|_{sq}$ is given by

$$
\Vert \omega \Vert_{sq} = \max_{j=s, \dots, q} \left(\max_{i=1, \dots, n} \vert \omega_i(j) \vert\right).
$$

\end{definition}

\begin{definition}\cite{Diblik5}
The reachability domain $\mathcal{W}_{\psi} \subseteq \mathbb{R}^d$ is defined as the set of all points $y(r_1)$ attained by solutions of the system \eqref{eq11}, corresponding to a fixed initial condition and to anarbitrary control $u\in \Omega_{\delta}(0, r_1 - 1)$.
\end{definition}
\begin{theorem}\label{uuu}
Problem \eqref{eq11} is relatively controllable if and only if the following two conditions are satisfied:
\begin{align}\label{c1}
    \operatorname{rank}(S) &= d,
\end{align}
and
\begin{align}\label{c2}
    r_1 &\geq r^* = \Big(\frac{d}{k}-1\Big)(p+1)+1,
\end{align}
where \( S = \{\mathcal{Q}(0,i) C,\, \mathcal{Q}(1,i) C,\, \dots ,\,\mathcal{Q}(d-1,i) C : i \in \mathbb{Z}_{0}^{d-1} \)\}, \,  with \( p \geq 1 \) being a positive integer.
\end{theorem}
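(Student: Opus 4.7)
The plan is to translate relative controllability into a surjectivity condition for the linear control-to-endpoint map and then unpack the delayed exponential via the atoms $\mathcal{Q}(r,i)$. Evaluating the solution representation \eqref{16} at $r=r_1$ and subtracting the two history terms produces exactly the vector $\eta$ in \eqref{s1}, so the terminal requirement $y(r_1)=y^*$ is equivalent to the linear equation
\begin{align*}
\eta \;=\; \sum_{j=1}^{r_1} Y_p^{A,B}(r_1-p-j)\,C\,u(j-1).
\end{align*}
Hence \eqref{eq11} is relatively controllable if and only if the operator $u\mapsto\sum_{j=1}^{r_1} Y_p^{A,B}(r_1-p-j)Cu(j-1)$ maps onto $\mathbb{R}^d$, i.e.\ if and only if
\begin{align*}
\operatorname{span}\bigl\{\text{columns of } Y_p^{A,B}(l)C : l \in \mathbb{Z}_{-p}^{r_1-p-1}\bigr\} = \mathbb{R}^d.
\end{align*}
Lemma \ref{lem1}, applied with $\xi=u$, then lets me pass between this spanning statement and the dual form: no nonzero $z\in\mathbb{R}^d$ satisfies $z^{\top}Y_p^{A,B}(l)C=0$ for every $l$ in the displayed range.

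The next step is to connect this span with the matrix $S$ through the expansion \eqref{v67}: when $l \in \mathbb{Z}_{(m-1)(p+1)+1}^{m(p+1)}$ one has $Y_p^{A,B}(l) = \sum_{i=0}^{m}\mathcal{Q}(l+p-pi, i)$, so the span above is generated by the blocks $\{\mathcal{Q}(r,i)C\}$ indexed by a set that grows with $r_1$; the largest $i$ reached is $m_{\max}=\lfloor (r_1-1)/(p+1)\rfloor$. Theorem \ref{t1} provides the Cayley--Hamilton-type stabilization that permits truncating the $r$-range to $\mathbb{Z}_0^{d-1}$, and since each block $\mathcal{Q}(r,i)C$ contributes at most $k$ columns, the $i$-range may likewise be cut back to $\mathbb{Z}_0^{d-1}$ once the rank saturates. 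Consequently the image of the control map coincides with the column span of $S$, \emph{provided} the $i$-range exposed by $r_1$ is wide enough.

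Necessity and sufficiency then split cleanly. If $\operatorname{rank}(S)<d$, the image lies in a proper subspace of $\mathbb{R}^d$ for any $r_1$, which together with the dual characterisation forces \eqref{c1}. If instead $r_1<r^\ast$, then $m_{\max}<d/k-1$, i.e.\ strictly fewer than $d/k$ distinct $i$-blocks appear; since each block supplies at most $k$ independent columns, the cumulative rank stays below $d$, forcing \eqref{c2}. Conversely, when both conditions hold the attainable index set sweeps enough $i$-blocks to reproduce all of $S$ up to the Theorem \ref{t1} equivalence, so the image equals $\mathbb{R}^d$ and a control $u^{\ast}$ achieving any prescribed $\eta$ exists. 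The main obstacle I anticipate is verifying that each new $i$-block $\mathcal{Q}(\,\cdot\,,m)C$ introduced when $l$ crosses into a fresh $(p+1)$-length interval actually enlarges the span --- i.e.\ its columns are genuinely independent of what is already spanned by smaller-$i$ atoms --- since this independence is what makes $r^{\ast}=(d/k-1)(p+1)+1$ a \emph{tight} threshold rather than a loose sufficient condition.
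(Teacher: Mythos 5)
Your overall route coincides with the paper's: evaluate \eqref{16} at $r=r_1$, reduce the terminal condition to the moment equation $\sum_{j=1}^{r_1}Y_p^{A,B}(r_1-p-j)Cu(j-1)=\eta$, characterize solvability for every $\eta$ by the rank of the family of atoms $\mathcal{Q}(r,i)C$, truncate the $r$-range with Theorem \ref{t1}, and extract $r^*$ from the largest index $m=\lfloor (r_1-1)/(p+1)\rfloor$ reached in \eqref{v67}. The gap sits at the pivot of your sufficiency argument. From \eqref{v67} you only obtain one inclusion: each $Y_p^{A,B}(l)C$ is a \emph{sum} of atoms $\mathcal{Q}(l+p-pi,i)C$, so the image of the control map is \emph{contained in} the column span of those atoms. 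Your claim that this image ``coincides with the column span of $S$'' requires the reverse inclusion --- that each individual atom $\mathcal{Q}(r,i)C$ lies in the span of the blocks $Y_p^{A,B}(l)C$ --- and that does not follow from the expansion alone. The paper supplies precisely this step by a separate mechanism: starting from $z^{\top}Y_p^{A,B}(r_1-p-j)C=0$ for all $j$ (obtained via Lemma \ref{lem1}), it iterates the recurrence of Lemma \ref{lem2} a total of $d-1$ times and evaluates at the specific indices $j=r_1,\, r_1-lp-1,\dots,\, r_1-lp-d+1$, where $Y_p^{A,B}$ collapses to $I$ at $-p$ and to $\Theta$ below $-p-1$, thereby peeling off $z^{\top}\mathcal{Q}(r,l)C=0$ one atom at a time; condition \eqref{c2} is invoked exactly here to ensure $r_1-(d-1)p\ge d$ so that these indices are admissible. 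Without some version of this unmixing argument your sufficiency direction is incomplete.

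Second, the obstacle you flag at the end is real and is not merely a matter of tightness. Your necessity argument for \eqref{c2} bounds the reachable dimension by $k$ times the number of distinct $i$-blocks, but for a fixed $i$ the family contains many atoms $\mathcal{Q}(r,i)C$ as $r$ varies (for $i=0$ these are $A^rC$, which by themselves can already span $\mathbb{R}^d$), so ``each block supplies at most $k$ independent columns'' is false for general $A$. That count is valid only in the pure-delay setting $A=I$ of \cite{Diblik5}, where all atoms sharing the same $i$ are scalar multiples of $B^iC$. The paper's own necessity argument performs essentially the same count (it treats \eqref{Q1} as a system in only $(m+1)k$ scalar unknowns $\lambda^{0},\dots,\lambda^{m}$, even though the control contributes $r_1k$ degrees of freedom), so on this point you have reproduced the paper's reasoning rather than introduced a new error; but neither version actually establishes the necessity of \eqref{c2} for general noncommuting $A$ and $B$.
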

\begin{proof}
\textbf{Necessity:}  Let the system \eqref{eq11} be relatively controllable. Then, according to Definition \ref{def2}, for any arbitrary initial function \(\psi: \mathbb{Z}_{-p}^{0} \to \mathbb{R}^d\), arbitrary finite terminal state \(y = y^* \in \mathbb{R}^d\), and arbitrary finite terminal point \(r_1\) greater than or equal to a fixed integer \(r^* \in \mathbb{Z}_1^\infty\), there exists a discrete function \(u^*: \mathbb{Z}_0^{r_1-1} \to \mathbb{R}^k\) such that the system \eqref{eq11} has a solution \(y^*: \mathbb{Z}_{-p}^{r_1} \to \mathbb{R}^d\) satisfying condition \(y(r_{1})=y^*\). We show that, in this case, \(\text{rank}(S) = d\) and \(r_1 \geq r^* = (d-1)(p+1)+1\) are true. We use the formula \eqref{16} is employed to represent the solution \(y^*\) of the problem \eqref{eq11}. At the moment \(r = r_1\), we have
\begin{align}\label{o1}
y^*&=y^*(r_{1}) = Y_{p}^{A,B}(r_{1}-p)\psi(0) + \sum_{j=-p}^{-1} Y_{p}^{A,B}(r_{1}-1-2p-j)B\psi(j)\nonumber\\
&+ \sum_{j=1}^{r_{1}} Y_{p}^{A,B}(r_{1}-p-j)Cu^*(j-1).
\end{align}
By using \eqref{s1}, we rewrite \eqref{o1} as follows
\begin{align}\label{o2}
 \sum_{j=1}^{r_{1}} Y_{p}^{A,B}(r_{1}-p-j)Cu^*(j-1)=\eta.
\end{align}
Since $r_{1}\geq 1$, and using the definition \ref{def1}, we get
\begin{align}\label{o3}
 \sum_{j=1}^{r_{1}}\Bigg[\sum_{i=0}^{\lfloor \frac{r_{1}-j }{p+1} \rfloor} \mathcal{Q}(r_{1} -j - pi; i) \Bigg] Cu^*(j-1)=\eta.
\end{align}
The left-hand side represents a linear combination of \( m + 1 \) vectors:
\[
\mathcal{Q}(0, i)C,\, \mathcal{Q}(1, i)C,\, \dots,\, \mathcal{Q}(m-1, i)C,\, \mathcal{Q}(m, i)C,
\]
where \( m\in \mathbb{Z}_{0}^{\infty} \), $i\in \mathbb{Z}_{0}^{m}$. We now determine the possible values of \( m \). The system of \( d \) equations given in \eqref{o3} can be reformulated in a simplified form as:
\begin{align}\label{Q1}
 \mathcal{Q}(0, i)C \lambda^{0}+  \mathcal{Q}(1, i)C\lambda^{1} + \dots +  \mathcal{Q}(m, i)C\lambda^{m} = \eta,
\end{align}
where the unknown vectors \( \lambda^{0}, \dots, \lambda^{m} \) are to be determined as follows:
\begin{align*}
   \lambda^{0}= \begin{bmatrix} \lambda^{0}_{1} \\\vdots\\
   \lambda^{0}_{k} \end{bmatrix}, \lambda^{1}= \begin{bmatrix} \lambda^{1}_{1} \\\vdots\\
   \lambda^{1}_{k} \end{bmatrix}.\dots,  \lambda^{m}= \begin{bmatrix} \lambda^{m}_{1} \\\vdots\\
   \lambda^{m}_{k} \end{bmatrix}.
\end{align*}
So that, we have
\begin{align*}
  \begin{bmatrix} \eta_{1} \\\vdots\\
   \eta_{d} \end{bmatrix}  & =\mathcal{Q}(0, i)\begin{bmatrix} \sum_{j=1}^{k} c_{1j}\lambda^{0}_{j} \\\vdots\\
  \sum_{j=1}^{k} c_{dj}\lambda^{0}_{j} \end{bmatrix}+\dots+\mathcal{Q}(m, i)\begin{bmatrix} \sum_{j=1}^{k} c_{1j}\lambda^{m}_{j} \\\vdots\\
  \sum_{j=1}^{k} c_{dj}\lambda^{m}_{j} \end{bmatrix}.
\end{align*}
A simplified version of equation \eqref{Q1} can be written as
\begin{align*}\begin{cases}
    \eta_{1}=\sum_{j=1}^{k}\Big(\sum_{l=1}^{d}\mathcal{Q}_{1l}(0,i)c_{lj}\Big)\lambda^{0}_{j}
    +\dots+\sum_{j=1}^{k}\Big(\sum_{l=1}^{d}\mathcal{Q}_{1l}(m,i)c_{lj}\Big)\lambda^{m}_{j},\\
     \eta_{2}=\sum_{j=1}^{k}\Big(\sum_{l=1}^{d}\mathcal{Q}_{2l}(0,i)c_{lj}\Big)\lambda^{0}_{j}
    +\dots+\sum_{j=1}^{k}\Big(\sum_{l=1}^{d}\mathcal{Q}_{2l}(m,i)c_{lj}\Big)\lambda^{m}_{j},\\
    \dots\dots\dots\dots\dots\dots\dots\dots\dots\dots \dots\dots\dots\dots\dots\dots\dots\dots\dots\dots\\
    \eta_{d}=\sum_{j=1}^{k}\Big(\sum_{l=1}^{d}\mathcal{Q}_{dl}(0,i)c_{lj}\Big)\lambda^{0}_{j}
    +\dots+\sum_{j=1}^{k}\Big(\sum_{l=1}^{d}\mathcal{Q}_{dl}(m,i)c_{lj}\Big)\lambda^{m}_{j}.
    \end{cases}
\end{align*}
It is important to note that \( \lambda^{0}, \dots, \lambda^{m} \) vectors depend on the values \( u^{*}(0), \dots, u^{*}(r_{1}-1) \), although this dependency is not explicitly considered here.  If \( (m+1)k < d \), the system of \( d \) equations given in \eqref{o3} with unknown vectors \( \lambda^{0}, \dots, \lambda^{m} \) becomes overdetermined. In such cases, a solution does not always exist if \( \eta \) is arbitrarily chosen. This leads to the necessary condition:
\begin{align}\label{t33}
(m+1)k \geq d
\end{align}
for \eqref{Q1} to admit a solution.

Consider \eqref{t33}, \eqref{o3}, and \eqref{Q1}. Looking for a maximal possible value of $m$, we see that the maximum value of $i$ in \eqref{o3}, which is the maximal value of $m$, is defined by the number
\begin{align}
    \Big\lfloor \frac{r_{1}-j}{p+1}\Big\rfloor.
\end{align}
If $j=1$. Therefore, from inequality \eqref{t33}, that is, from
\begin{align}
    m= \Big\lfloor \frac{r_{1}-1}{p+1}\Big\rfloor\geq \frac{d}{k}-1.
\end{align}
we get $r_{1}\geq(\frac{d}{k}-1)(p+1)+1=r^*$, and  \eqref{c2} holds. According to the Theorem \ref{t1} , any function of the form \(\mathcal{Q}(j, i)\) for \(j \geq d\) can be represented as a linear combination of the functions:
\[
\mathcal{Q}(0, i)C,\, \mathcal{Q}(1, i)C,\, \dots,\,  \mathcal{Q}(d-1, i)C,\, \text{for},\, i\in \mathbb{Z}_{0}^{d-1}.
\]
Thus, the aforementioned system \eqref{Q1} of \( d \) linear equations has a solution for any arbitrary right-hand side \( \eta \) if and only if \( \operatorname{rank}(S) = d \). As a result, condition \eqref{c1} is satisfied.

\textbf{Sufficiency:} Let \eqref{c1} and \eqref{c2} hold. We demonstrate that the system \eqref{eq11} is relatively controllable. To begin, we establish that the dimension of the reachability domain is equal to \( d \). Assume the contrary, i.e., \( \dim \mathcal{W}_{\psi} < d \). Then there exists a nontrivial constant vector \( z = (z_1, z_2, \dots, z_d)^T \in \mathbb{R}^d \) such that, for any control \( u \in \Omega_{\delta}(0, r_1-1) \) and the corresponding solution \( y = y(r; u, \psi) \) of the problem \eqref{eq11}, we have
\begin{align}\label{14}
z^T y(r_1) = 0.
\end{align}
We choose the zero initial function \( \psi(r) = 0, \, r \in \mathbb{Z}_0^{-p}. \) Then, with the aid of formula \eqref{16}, we obtain:
\begin{align}\label{23}
y(r_{1}) = \sum_{j=1}^{r_{1}} Y_{p}^{A,B}(r_{1}-p-j)Cu(j-1),
\end{align}
 and \eqref{14} becomes
 \begin{align}\label{i9}
z^T \sum_{j=1}^{r_{1}} Y_{p}^{A,B}(r_{1}-p-j)Cu(j-1)=0.
\end{align}
Since \eqref{i9} is valid for any arbitrary control \( u \in \Omega_{\delta}(0, r_1 - 1) \), applying Lemma \ref{lem1} gives the result:
\begin{align}\label{w1}
z^T  Y_{p}^{A,B}(r_{1}-p-j)C = 0, \, j \in \mathbb{Z}_0^{r_1}.
\end{align}
 Now we apply the lemma \ref{lem2} to \eqref{w1}. We obtain
 \begin{align}\label{w2}
&z^T  Y_{p}^{A,B}(r_{1}-p-j)C = z^{T}(A Y_{p}^{A,B}(r_{1}-p-j-1)+B Y_{p}^{A,B}(r_{1}-2p-j-1))C=0,
\end{align}
or
\begin{align}\label{w3}
  z^{T} \sum_{l=0}^{1} \mathcal{Q}(1,l) Y_{p}^{A,B}(r_{1}-(l+1)p-j-1)C=0,\, j \in \mathbb{Z}_0^{r_1}.
\end{align}
We repeatedly apply the same procedure to \eqref{w3} a total of \( d-2 \) times. Then, according to Lemma \ref{lem2}, for $ j \in \mathbb{Z}_0^{r_1},$ we have:
\begin{align}\label{w4}
& z^{T} \sum_{l=0}^{2} \mathcal{Q}(2,l) Y_{p}^{A,B}(r_{1}-(l+1)p-j-2)C=0,\\
& z^{T} \sum_{l=0}^{3} \mathcal{Q}(3,l) Y_{p}^{A,B}(r_{1}-(l+1)p-j-3)C=0,\label{w5}
 \\
   & \dots\dots\dots \dots\dots\dots\dots\dots\dots \dots\dots\dots\dots\dots\dots \dots\dots\dots\nonumber
   \\
& z^{T} \sum_{l=0}^{d-1} \mathcal{Q}(d-1,l) Y_{p}^{A,B}(r_{1}-(l+1)p-j-d+1)C=0,\label{w6}\end{align}
We substitute \( j = r_1 \) into \eqref{w1}, \( j = r_1 - lp -1\) into \eqref{w3}, and proceed step by step with \( j = r_1 - lp-2, j = r_1 - lp-3,\dots, j = r_1 - lp-d+1 \) in \eqref{w4}, \eqref{w5}, and \eqref{w6}.  Since inequality \( r_1 \geq (d-1)(p+1)+1 \) is valid due to \eqref{c2}, we have \( r_1 - (d-1)p \geq d \geq 1 \), ensuring that all the chosen indices \( j \) are valid.
By Lemma \ref{lem2}, we have \( Y_{p}^{A,B}(-p) = I \). Consequently, the system \eqref{w1}–\eqref{w6} (with respect to \( b \)) simplifies to:
\begin{align}\label{w7}
    z^{T} \mathcal{Q}(0,l)C=0, \dots,\, z^{T} \mathcal{Q}(d-1,l)C=0,\, \text{for}\, l\in \mathbb{Z}_{0}^{d-1}.
\end{align}
Using the definition of the matrix \( S \), we rewrite \eqref{w7} as
\begin{align}\label{w8}
z^T S = 0.
\end{align}
Thus the matrix \( S \) has no full rank. This contradicts condition \eqref{c1}. Therefore, the inequality \( \dim \mathcal{W}_\psi < d \) does not hold, and the dimension of the reachability domain is equal to \( d \).

Since the domain of reachability \( \mathcal{W}_\psi \) includes the point \( -y(r_1) \) corresponding to a control \( -u \in \Omega_{\delta}(0, r_1 - 1) \) along with the point \( y(r_1) \) corresponding to a control \( u \in \Omega_{\delta}(0, r_1 - 1) \) (based on the choice of a zero initial function and the representation in \eqref{23}, we conclude that \( \mathcal{W}_\psi \) is symmetric. Furthermore, the domain of reachability contains the segment connecting the point \( y(r_1) \) to \( -y(r_1) \).

As a result, due to the linearity of the problem, it also contains a ball of radius \( \varepsilon \).
 \begin{align}\label{w9}
      U_\varepsilon = \{ y \in \mathbb{R}^d \mid \|y\| < \varepsilon \}
       \end{align}
 with positive \( \varepsilon \). Clearly, if \( \delta \to \infty \) in \( \Omega_{\delta}(0, r_1 - 1) \), then, due to the finiteness of the interval \( \mathbb{Z}_0^{r_1 - 1} \), it follows that \( \varepsilon \to \infty \) in the definition of \( U_\varepsilon \). As a result, the domain \( \mathcal{W}_\psi \) spans the entire space \( \mathbb{R}^d \). This implies that for every point \( y^* \in \mathbb{R}^d \), there exists a control \( u = u^* \) that solves the problem \eqref{eq11}. This conclusion remains valid even if the initial function \( \psi \) is nonzero.
Indeed, by introducing the transformation \( y(r) = y_\psi(r) + \xi(r) \), where \( y_\psi(r) \) is a solution of the homogeneous problem
\begin{align*}
    \begin{cases}
        y_\psi(r+1) =A y_{\psi}(r)+ B y_\psi(r - p), & r \in \mathbb{Z}_0^{r_1 - 1},\\
y_\psi(r) = \psi(r), & r \in \mathbb{Z}_{-p}^0,
    \end{cases}
\end{align*}
the problem is reduced to one with respect to \( \xi \), which has a zero initial function.
\end{proof}

We obtain the following result as a special case.

\begin{corollary}\cite{Diblik5}
    When \( A = I \) and \(C=b\) in \eqref{eq11}, the problem \eqref{eq11} is relatively controllable if and only if the following two conditions are satisfied:
\begin{align}\label{c11}
    \operatorname{rank}(S) &= d,
\end{align}
and
\begin{align}\label{c22}
    r_1 &\geq r^* = (d-1)(p+1)+1,
\end{align}
where \( S = \{ Ib,\, B b,\, \dots ,B^{d-1} b \} \), with \( p \geq 1 \) being a positive integer.
\end{corollary}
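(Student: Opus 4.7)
The plan is to derive this corollary as a direct specialization of Theorem \ref{uuu} to the case $A = I$ and $C = b \in \mathbb{R}^{d}$, which forces the control dimension $k = 1$. The first half of the argument is almost trivial: substituting $k = 1$ into $r^{*} = (d/k - 1)(p+1) + 1$ from \eqref{c2} immediately yields $r^{*} = (d-1)(p+1) + 1$, so condition \eqref{c2} reduces word-for-word to \eqref{c22}.

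The more substantive step is to show that the generalized rank condition \eqref{c1} collapses to the Krylov-type condition \eqref{c11}. Here I would invoke the remark following Definition \ref{def1}: since $A = I$ commutes with $B$, the kernel simplifies to
\begin{equation*}
\mathcal{Q}(r, i) \;=\; \binom{r}{i} A^{r-i} B^{i} \delta(r - i) \;=\; \binom{r}{i} B^{i} \delta(r - i),
\end{equation*}
so that $\mathcal{Q}(r, i)\, b = \binom{r}{i} B^{i} b$ for $r \ge i$ and equals zero otherwise. Consequently, every vector in the generalized set $\{\mathcal{Q}(r, i) b : r, i \in \mathbb{Z}_{0}^{d-1}\}$ is a scalar multiple of some $B^{i} b$, and conversely $\mathcal{Q}(i, i) b = B^{i} b$ appears as a column, for each $i \in \mathbb{Z}_{0}^{d-1}$. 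Therefore the column span of the generalized $S$ coincides exactly with the span of $\{I b,\, B b,\, \dots,\, B^{d-1} b\}$, which makes \eqref{c1} and \eqref{c11} equivalent.

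Having translated both hypotheses of Theorem \ref{uuu} into the formulations \eqref{c11}--\eqref{c22}, the corollary follows by a single application of that theorem. I do not anticipate a genuine obstacle in carrying this out: the entire argument is a direct substitution, and the only step that requires any care is verifying that the two-index family $\{\mathcal{Q}(r, i) b\}$ degenerates to a single-index Krylov sequence, which is already guaranteed by the commutativity $IB = BI$ and the closed form for $\mathcal{Q}(r, i)$ recorded in the remark.
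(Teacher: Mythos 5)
Your proposal is correct and follows exactly the route the paper intends: the paper states this corollary without proof as "a special case" of Theorem \ref{uuu}, and your specialization ($k=1$ reducing \eqref{c2} to \eqref{c22}, and the commutative closed form $\mathcal{Q}(r,i)=\binom{r}{i}B^{i}\delta(r-i)$ collapsing the two-index family to the Krylov sequence $\{b,Bb,\dots,B^{d-1}b\}$) supplies precisely the missing details. No discrepancy with the paper's approach.
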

\section{A control function for the delay discrete system}\label{sec4}
This section constructs control function $u^*$ for the discrete-time delay system governed by equation \eqref{eq11}. The development of this control law relies on establishing relative
controllability and solving a system involving a controllability Gramian. The core results are summarized below with their respective proofs.

\begin{lemma}\label{re}
The system \(\eqref{eq11}\) is relatively controllable if and only if

\begin{align}\label{qre}
\eta^{\top}Y_{p}^{A,B}(r)C=0 \quad \text{for all}\quad  r\in \mathbb{Z}_{-\infty}^{r^*-1}\implies \eta=0.
\end{align}
\end{lemma}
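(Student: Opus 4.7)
The plan is to prove both implications by combining the solution representation \eqref{16} with the Kalman-type characterization in Theorem \ref{uuu}. In the forward direction I exploit relative controllability to steer the state to $\eta$ itself; in the reverse direction I translate a rank deficiency of $S$ into a violation of the implication in \eqref{qre} via the series expansion of $Y_{p}^{A,B}$.

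For the forward ($\Rightarrow$) direction, assume the system is relatively controllable and that $\eta^{\top} Y_{p}^{A,B}(r) C = 0$ for every $r \in \mathbb{Z}_{-\infty}^{r^{*}-1}$. Taking the zero initial history $\psi \equiv 0$, terminal time $r_{1} = r^{*}$, and target $y^{*} = \eta$, Definition \ref{def2} supplies a control $u^{*}$ for which \eqref{16} reduces to
\begin{align*}
\eta \;=\; \sum_{j=1}^{r^{*}} Y_{p}^{A,B}(r^{*}-p-j)\, C\, u^{*}(j-1).
\end{align*}
Contracting with $\eta^{\top}$ and noting that each index $r^{*}-p-j$ with $j \in \mathbb{Z}_{1}^{r^{*}}$ lies in $\mathbb{Z}_{-p}^{r^{*}-p-1} \subset \mathbb{Z}_{-\infty}^{r^{*}-1}$, every summand vanishes by the hypothesis. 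Hence $\|\eta\|^{2} = 0$, so $\eta = 0$.

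For the reverse ($\Leftarrow$) direction, I argue by contrapositive via Theorem \ref{uuu}. If the system is not relatively controllable, then since $r_{1} \geq r^{*}$ is built into the definition, the failure must come from $\operatorname{rank}(S) < d$, which produces a nonzero $\eta$ with $\eta^{\top} \mathcal{Q}(j,i) C = 0$ for all $j,i \in \mathbb{Z}_{0}^{d-1}$. Theorem \ref{t1} then propagates this annihilation to $\eta^{\top} \mathcal{Q}(j,i) C = 0$ for all $j,i \geq 0$, because the rank equality there expresses every $\mathcal{Q}(j,i)C$ with larger indices as a linear combination of the columns of $S$. Substituting into the defining sum of Definition \ref{def1},
\begin{align*}
Y_{p}^{A,B}(r)\, C \;=\; \sum_{i=0}^{\lfloor (r+p)/(p+1)\rfloor} \mathcal{Q}(r+p-pi,\, i)\, C \quad (r \geq -p),
\end{align*}
together with $Y_{p}^{A,B}(r) = \Theta$ for $r \leq -p-1$, yields $\eta^{\top} Y_{p}^{A,B}(r)\, C = 0$ for every $r \in \mathbb{Z}$, in particular for $r \leq r^{*}-1$. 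The assumed implication then forces $\eta = 0$, contradicting $\eta \neq 0$.

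I expect the only delicate point to be the indexing in the forward direction — checking that every argument $r^{*}-p-j$ in the expansion of $y(r^{*})$ really lies in $\mathbb{Z}_{-\infty}^{r^{*}-1}$ — together with the explicit use of Theorem \ref{t1} to lift annihilation from the finite set $S$ to the full family $\{\mathcal{Q}(j,i) C\}$ in the reverse direction. Both are short verifications, and together they identify $\mathbb{Z}_{-\infty}^{r^{*}-1}$ as precisely the range over which nonvanishing of $\eta^{\top} Y_{p}^{A,B}(\cdot) C$ encodes full reachability.
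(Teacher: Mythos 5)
Your proof is correct, and one half of it takes a genuinely different route from the paper. For the reverse direction (condition \eqref{qre} implies controllability), you argue exactly as the paper does, just more explicitly: a rank deficiency of $S$ yields a nonzero $\eta$ annihilating all $\mathcal{Q}(j,i)C$ with $j,i\in\mathbb{Z}_0^{d-1}$, Theorem \ref{t1} lifts this to all indices, and the series in Definition \ref{def1} then transfers the annihilation to $Y_p^{A,B}(r)C$; the paper compresses all of this into the single sentence ``this implies $\operatorname{rank}(S)=d$.'' For the forward direction, however, the paper asserts that $\eta^{\top}Y_p^{A,B}(r)C=0$ on $\mathbb{Z}_{-\infty}^{r^*-1}$ places $\eta$ in $\ker(S^{\top})$ — a step that, to be justified, requires the iterative extraction of the individual $\mathcal{Q}(j,i)C$ via Lemma \ref{lem2}, as carried out in \eqref{w1}--\eqref{w7} in the proof of Theorem \ref{uuu}. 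You avoid this entirely with the standard steering trick: reach $y^*=\eta$ from zero history at time $r_1=r^*$, contract the representation \eqref{16} with $\eta^{\top}$, check that every argument $r^*-p-j$ lies in $\mathbb{Z}_{-p}^{r^*-p-1}\subset\mathbb{Z}_{-\infty}^{r^*-1}$, and conclude $\|\eta\|^2=0$. This is more elementary and self-contained — it uses only Definition \ref{def2} and the solution formula, not the rank machinery — and it makes transparent why the window $\mathbb{Z}_{-\infty}^{r^*-1}$ is the right one. The only cosmetic caution is your reuse of the symbol $\eta$, which clashes with the vector defined in \eqref{s1}; with $\psi\equiv 0$ the two happen to coincide, but it is worth flagging.
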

\begin{proof}
\textbf{Sufficiency:} Assume the condition \eqref{qre} holds and the
 only solution is $\eta=0$. This implies the matrix
   \[
   S = \{\mathcal{Q}(0,i) C,\, \mathcal{Q}(1,i) C,\, \dots, \mathcal{Q}(d-1,i) C : i \in \mathbb{Z}_{0}^{d-1} \}
   \]
 has full rank: $  \operatorname{rank}(S) = d$, ensuring the system is relatively controllable.
 
\textbf{Necessity:} Suppose the system is relatively controllable and $\eta \neq 0$ satisfies $\eta^\top Y^{A,B}_{p}(r) C = 0$. 
Then $\eta \in \ker(S^\top)$, which contradicts the full-rank assumption of $S$. 
Thus, $\eta = 0$.
\end{proof}
\begin{lemma}\label{qqq}
 For \( r_1 \geq r^* \), the discrete controllability
 Gramian
\[
\Gamma = \sum_{j=1}^{r_1} Y_{p}^{A,B}(r_1 - p - j) C C^{\top} \big(Y_{p}^{A,B}(r_1 - p - j) \big)^{\top} 
\]
 is nonsingular (i.e., positive definite).
\end{lemma}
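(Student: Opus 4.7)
The plan is to exploit the fact that $\Gamma$ is, by construction, a sum of outer products,
\begin{align*}
\Gamma \;=\; \sum_{j=1}^{r_1} \bigl(Y_{p}^{A,B}(r_1-p-j)\,C\bigr)\bigl(Y_{p}^{A,B}(r_1-p-j)\,C\bigr)^{\top},
\end{align*}
so $\Gamma$ is automatically symmetric and positive semi-definite. To upgrade this to strict positive definiteness, I would verify that the quadratic form $z^{\top}\Gamma z$ cannot vanish on any nonzero $z\in\mathbb{R}^d$. Expanding the quadratic form gives
\begin{align*}
z^{\top}\Gamma z \;=\; \sum_{j=1}^{r_1} \bigl\Vert C^{\top}\bigl(Y_{p}^{A,B}(r_1-p-j)\bigr)^{\top} z\bigr\Vert^{2},
\end{align*}
so $z^{\top}\Gamma z=0$ forces $z^{\top}\,Y_{p}^{A,B}(r_1-p-j)\,C = 0$ for every $j\in\{1,\dots,r_1\}$, i.e.\ $z^{\top}\,Y_{p}^{A,B}(r)\,C=0$ for all $r\in\mathbb{Z}_{-p}^{\,r_1-p-1}$. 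Extending trivially by $Y_{p}^{A,B}(r)=\Theta$ for $r<-p$ (Lemma \ref{lem2}), the vanishing actually holds for every $r\in\mathbb{Z}_{-\infty}^{\,r_1-p-1}$.

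The next step is to recognise that this is precisely the set of identities \eqref{w1} obtained in the sufficiency direction of Theorem \ref{uuu}. One now replays that argument: using the recursion $Y_{p}^{A,B}(r+1)=A\,Y_{p}^{A,B}(r)+B\,Y_{p}^{A,B}(r-p)$ from Lemma \ref{lem2}, iterate up to $d-1$ times to produce the chain of relations \eqref{w3}--\eqref{w6}, specialise the parameter $j$ as in Theorem \ref{uuu}, and invoke $Y_{p}^{A,B}(-p)=I$ to collapse everything to $z^{\top}\mathcal{Q}(n,l)\,C=0$ for all $n,l\in\mathbb{Z}_0^{\,d-1}$, that is, $z^{\top}S=0$. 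Equivalently, and more compactly, one can skip the recursion entirely and simply apply Lemma \ref{re}: the vanishing of $z^{\top}Y_{p}^{A,B}(r)C$ on $\mathbb{Z}_{-\infty}^{\,r_1-p-1}\supseteq \mathbb{Z}_{-\infty}^{\,r^{*}-1}$ (this inclusion being where the hypothesis $r_1\geq r^{*}$ is consumed, together with the trivial extension below $-p$) is exactly the premise of \eqref{qre}, hence $z=0$.

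Since Section \ref{sec4} proceeds under the standing hypothesis of relative controllability, i.e.\ $\operatorname{rank}(S)=d$ by Theorem \ref{uuu}, the relation $z^{\top}S=0$ forces $z=0$, so $\Gamma$ is strictly positive definite and therefore nonsingular. The main obstacle is not conceptual but bookkeeping: one has to check carefully that iterating Lemma \ref{lem2} inside the shifted index window $\{-p,-p+1,\dots,r_1-p-1\}$ really produces every generator $\mathcal{Q}(n,l)C$ of $S$, and it is exactly the lower bound $r^{*}=\bigl(\tfrac{d}{k}-1\bigr)(p+1)+1$ that supplies enough indices for the Cayley--Hamilton style reduction of Theorem \ref{t1} to close the loop.
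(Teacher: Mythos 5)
Your proposal is correct and follows essentially the same route as the paper: write $\Gamma$ as a sum of outer products, observe that it is symmetric positive semidefinite, and use Lemma \ref{re} (equivalently, the rank condition $\operatorname{rank}(S)=d$ via the recursion of Lemma \ref{lem2} as in the sufficiency part of Theorem \ref{uuu}) to rule out a nonzero $z$ with $z^{\top}\Gamma z=0$. The one caveat is that the inclusion $\mathbb{Z}_{-\infty}^{\,r_1-p-1}\supseteq\mathbb{Z}_{-\infty}^{\,r^*-1}$ you invoke literally requires $r_1\ge r^*+p$ rather than $r_1\ge r^*$, but this off-by-$p$ bookkeeping is equally glossed over in the paper's own one-line appeal to Lemma \ref{re}, and your alternative of replaying the Theorem \ref{uuu} recursion addresses it.
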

\begin{proof} For any nonzero vector $\eta\in \mathbb{R}^d$, consider
\begin{align*}
    v_j=\left(Y_{p}^{A,B}(r_1 - p - j) C\right)^{\top} \eta.
\end{align*}
Since the columns of $Y^{A,B}_{p}(r)C$ are linearly independent (by Lemma \ref{re}), $v_j \neq 0$ for at least one $j$. Thus,
\[
\eta^\top \Gamma \eta = \sum_{j=1}^{r_1} \langle v_j^\top ,v_j \rangle = \sum_{j=1}^{r_1} \|v_j\|^2 > 0.
\]
Hence, $\Gamma$ is positive definite $\Rightarrow \det(\Gamma) \neq 0$.
\end{proof}
\begin{theorem}\label{rr4} If the system is relatively controllable, then
 the  control function solving equation \eqref{eq11} is given by:
\begin{align*}
    u^*(r) = C^{\top} \big(Y^{A,B}_{p}(r_{1} - p - r - 1)\big)^{\top} \Gamma^{-1} \eta,\quad r \in  \mathbb{Z}_{0}^{r_{1}-1}
\end{align*}
\end{theorem}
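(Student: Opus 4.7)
The plan is to verify directly that the proposed $u^*$ drives the state to $y^*$ at time $r_1$ by substituting it into the explicit solution formula \eqref{16} and collapsing the resulting sum using the definition of the Gramian $\Gamma$. Since relative controllability is assumed, Lemma \ref{qqq} guarantees that $\Gamma$ is nonsingular, so $\Gamma^{-1}\eta$ is a well-defined vector in $\mathbb{R}^d$ and $u^*(r)$ makes sense for every $r \in \mathbb{Z}_0^{r_1-1}$.

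The first step is to write down $y(r_1)$ using \eqref{16} and bring the terms that do not depend on the control to the left side, invoking the definition \eqref{s1} of $\eta$ to reduce the terminal condition $y(r_1)=y^*$ to the single requirement
\begin{equation*}
\sum_{j=1}^{r_1} Y_{p}^{A,B}(r_1-p-j)\,C\,u^*(j-1) \;=\; \eta.
\end{equation*}
The second step is to substitute the candidate $u^*(j-1) = C^{\top}\bigl(Y_{p}^{A,B}(r_1-p-j)\bigr)^{\top}\Gamma^{-1}\eta$ (obtained by replacing $r$ with $j-1$ in the formula for $u^*$) into this identity. Factoring out the constant vector $\Gamma^{-1}\eta$ from the sum then yields
\begin{equation*}
\sum_{j=1}^{r_1} Y_{p}^{A,B}(r_1-p-j)\,C\,C^{\top}\bigl(Y_{p}^{A,B}(r_1-p-j)\bigr)^{\top}\,\Gamma^{-1}\eta \;=\; \Gamma\,\Gamma^{-1}\eta \;=\; \eta,
\end{equation*}
by the very definition of $\Gamma$ in Lemma \ref{qqq}. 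This establishes the claim.

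The argument is essentially a bookkeeping check, so there is no genuine obstacle once relative controllability is in force; the only care needed is in the index shift $r \mapsto j-1$ (which produces $r_1-p-j$ as the argument of $Y_{p}^{A,B}$ in both factors, matching the summand in $\Gamma$), and in citing Lemma \ref{qqq} at the right moment to ensure that $\Gamma^{-1}$ exists. Optionally, one may also note that the $u^*$ obtained this way is the minimum-norm control in the $\ell^2$ sense, since it lies in the range of the adjoint of the controllability operator $u \mapsto \sum_{j=1}^{r_1} Y_{p}^{A,B}(r_1-p-j)Cu(j-1)$, but this is a remark rather than a necessary part of the proof.
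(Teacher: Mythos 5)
Your proposal is correct and follows essentially the same route as the paper: both reduce the terminal condition to $\sum_{j=1}^{r_1} Y_{p}^{A,B}(r_1-p-j)Cu^*(j-1)=\eta$ and exploit the identity $\Gamma\,\Gamma^{-1}\eta=\eta$, the only cosmetic difference being that the paper derives the formula from the ansatz $u(j-1)=\bigl(Y_{p}^{A,B}(r_1-p-j)C\bigr)^{\top}\mathcal{E}$ while you verify the stated formula directly. Your explicit appeal to Lemma \ref{qqq} for the existence of $\Gamma^{-1}$ and the index-shift check are consistent with (indeed slightly more careful than) the paper's argument.
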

\begin{proof}
 The control must satisfy:
\begin{align*}
 \sum_{j=1}^{r_{1}} Y_{p}^{A,B}(r_{1}-p-j)Cu(j-1)=\eta .
\end{align*}
 Assume a parametric form:
\begin{align*}
    u(j-1) = \left(Y_{p}^{A,B}(r_{1}-p-j)C\right)^{\top} \mathcal{E}.
\end{align*}
 Substituting into the above equation yields:
\begin{align*}
    \Gamma \mathcal{E}=\eta\implies \mathcal{E} = \Gamma^{-1} \eta.
\end{align*}
Substituting back, the control law becomes:
\begin{align*}
u^*(r) = C^{\top}\left(Y^{A,B}_{p}(r_{1} - p - r - 1)\right)^{\top} \Gamma^{-1} \eta.
\end{align*}
\end{proof}
\section{Numerical examples}\label{sec5}
In this section, we present numerical examples to reinforce our theoretical findings on the relative controllability of discrete-time delay systems described by \eqref{eq11}. 
\begin{example}
    Consider the following discrete-time delay system:
\begin{equation} \label{eq_example}
\begin{cases}
 y(r+1) = \begin{bmatrix} 1 & 2 \\ 0 & 1 \end{bmatrix} y(r) + \begin{bmatrix} 0 & 1 \\ 1 & 0 \end{bmatrix} y(r-1) + \begin{bmatrix} 1 \\ 0 \end{bmatrix} u(r), \, r \in \mathbb{Z}_{0}^{2}, \\
y(-1) = \begin{bmatrix} 1 \\ 0 \end{bmatrix}, \quad y(0) = \begin{bmatrix} 2 \\ 1 \end{bmatrix}, \\
y(3) = \begin{bmatrix} 21 \\ 14 \end{bmatrix}.
\end{cases}
\end{equation}
According to Theorem \ref{uuu}, we define $S$ as follows:
\[
S = \{\mathcal{Q}(0,0) C\, \mathcal{Q}(1,0) C\, \mathcal{Q}(1,0) C\} = \{C \, AC \, BC\}.
\]
Substituting the values of \( A,\, B \), and \(C\)  we obtain:
\[
S = \Bigg\{  \begin{bmatrix} 1 \\ 0 \end{bmatrix}\,  \begin{bmatrix} 1 \\ 0 \end{bmatrix}\, \begin{bmatrix} 0 \\ 1 \end{bmatrix} \Bigg\}\implies \, \text{rank}(S)=2.
\]
On the other hand, we compute:
\[
d = 2,\, k=1\quad \text{and}\quad r_1 = 3.
\]
Checking the second condition of Theorem \ref{uuu}:
\begin{align}
   r_1 &\geq r^* = \Big(\frac{d}{k}-1\Big)(p+1)+1,
\end{align}
we find that this condition is satisfied.

Therefore, based on Theorem \ref{uuu}, the system \eqref{eq_example} is relatively controllable.
\end{example}
\begin{example}
In this second example, we examine the same discrete-time delay system. Our objective is to illustrate the application of the lemma \ref{qqq} by calculating the discrete Gramian matrix under the conditions specified in Example 1.

The discrete Gramian matrix is defined as:
\begin{align*}
\Gamma &= \sum_{j=1}^{3} \Big[ Y_{1}^{A,B}(2 - j) C C^{\top} \big(Y_{1}^{A,B}(2 - j) \big)^{\top} \Big].
\end{align*}
Using the definition of \( Y_{p}^{A,B}(r) \) from equation \eqref{v67}, we determine the values of \( Y_{1}^{A,B}(1) \), \( Y_{1}^{A,B}(0) \), and \( Y_{1}^{A,B}(-1) \) as follows:
\begin{align*}
\begin{cases}
 Y_{1}^{A,B}(1) = \mathcal{Q}(2,0) + \mathcal{Q}(1,1) = A^2 + B,\\
 Y_{1}^{A,B}(0) = \mathcal{Q}(1,0) = A,\\
 Y_{1}^{A,B}(-1) = \mathcal{Q}(0,0) = I.
\end{cases}
\end{align*}
Substituting the given system matrices, we obtain:
\begin{align*}
\begin{cases}
     Y_{1}^{A,B}(1) C  C^{\top} \big(Y_{1}^{A,B}(1) \big)^{\top} = \begin{bmatrix} 1 & 1 \\ 1 & 1 \end{bmatrix}, \\
     Y_{1}^{A,B}(0) C  C^{\top} \big(Y_{1}^{A,B}(0) \big)^{\top}= \begin{bmatrix} 1 & 0 \\ 0 & 0 \end{bmatrix},\\
       Y_{1}^{A,B}(-1) C  C^{\top} \big(Y_{1}^{A,B}(-1) \big)^{\top} = \begin{bmatrix} 1 & 0 \\ 0 & 0 \end{bmatrix}.
     \end{cases}
\end{align*}
Now, calculating the Gramian matrix:
\begin{align*}
    \Gamma = \begin{bmatrix} 3 & 1 \\ 1 & 1 \end{bmatrix}\implies  \det(\Gamma) = 2 \neq 0.
\end{align*}
Since the determinant of \( \Gamma \) is non-zero, we conclude that the discrete Gramian matrix is non-singular.
\end{example}
\begin{example}
In this example, we illustrate  \eqref{eq_example} using Lemma \ref{qqq}, Theorem \ref{rr4}, and the solution \eqref{16}. By Lemma \ref{lem2},  
\[
Y_{1}^{A,B}(2)=AY_{1}^{A,B}(1)+BY_{1}^{A,B}(0)=A^3 +AB+BA=\begin{bmatrix} 3&8 \\ 2&3 \end{bmatrix}.
\]  
From \eqref{s1}, we obtain  
\[
\eta =y^* -Y_{1}^{A,B}(2)\psi(0)-  Y_{1}^{A,B}(1)B\psi(-1)=\begin{bmatrix} 2 \\ 6 \end{bmatrix}.
\]  
The control function is  
\[
u^*(r) = C^{\top} \big(Y^{A,B}_{1}(1 - r)\big)^{\top} \Gamma^{-1} \eta, \quad r \in \mathbb{Z}_{0}^{2},
\]  
which yields $u^*(0)=6$ and $u^*(1)=-2$. Using \eqref{16}, we get  
\[
y(1)=\begin{bmatrix} 10 \\ 2 \end{bmatrix}, \quad y(2)=\begin{bmatrix} 13 \\ 4 \end{bmatrix}.
\]  

On the other hand, there exist infinitely many control functions solving \eqref{eq_example}. Table~\ref{tab:my_table} lists six examples, and Fig.~\ref{fig:trajectories} shows the corresponding trajectories. Each solution $y(r)=(y_1(r),y_2(r))^{\top},\, r\in \mathbb{Z}_{-1}^{3}$, is visualized as points fitted with smooth lines. The blue points mark the initial values, the red point indicates the terminal state, and the solution corresponding to $u^*$ is shown with a blue curve.  

\begin{table}[h!]
\centering
\renewcommand{\arraystretch}{1.8} 
\setlength{\tabcolsep}{10pt} 
\begin{tabular}{|c|c|c|c|c|c|}
\hline
 & $u(0)$ & $u(1)$ & $y(1)$ & $y(2)$ & $y^*=y(3)$ \\ \hline
$\mathbf{r}$ & $\mathbf{6}$ & $\mathbf{-2}$ & $\mathbf{(10,2)^{\top}}$ & $\mathbf{(13,4)^{\top}}$ & $\mathbf{(21,14)^{\top}}$ \\ \hline
p & 6 & 3 & $(10,2)^{\top}$ & $(18,4)^{\top}$ & $(21,14)^{\top}$ \\ \hline
q & 6 & 5 & $(10,2)^{\top}$ & $(20,4)^{\top}$ & $(21,14)^{\top}$ \\ \hline
v & 6 & $-12$ & $(10,2)^{\top}$ & $(3,4)^{\top}$ & $(21,14)^{\top}$ \\ \hline
w & 6 & $\frac{1}{2}$ & $(10,2)^{\top}$ & $(15.5,4)^{\top}$ & $(21,14)^{\top}$ \\ \hline
l & 6 & $-\frac{3}{2}$ & $(10,2)^{\top}$ & $(13.5,4)^{\top}$ & $(21,14)^{\top}$ \\ \hline
\end{tabular}
\caption{Control functions and corresponding solutions}
\label{tab:my_table}
\end{table}

\begin{figure}[h!]
    \centering
    \includegraphics[width=0.8\textwidth]{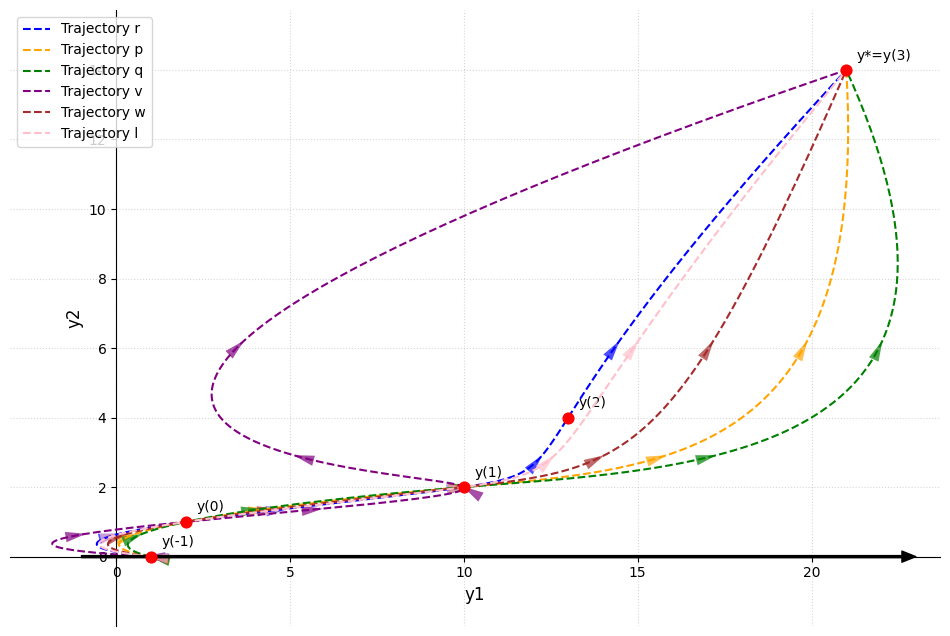}
    \caption{Trajectories of \eqref{eq_example} for different control functions.}
    \label{fig:trajectories}
\end{figure}

\end{example}

\section{Conclusion}

In this work, we have made the following contributions:  

\begin{itemize}
\item Investigated discrete-time linear difference systems with delay and established a Kalman-type rank condition for relative controllability.
\item Demonstrated the non-singularity of the discrete Gramian matrix, a key property in the analysis of system behavior.
\item Derived necessary and sufficient conditions ensuring the relative controllability of the system.
\item Constructed an explicit control function enabling effective regulation of the system state.
\item Provided numerical examples illustrating the applicability and validity of the theoretical results.
\end{itemize}

\end{document}